\documentclass[12pt,reqno]{article}

\usepackage[usenames]{color}
\usepackage{amssymb}
\usepackage{graphicx}
\usepackage{amscd}

\usepackage[colorlinks=true,
linkcolor=webgreen,
citecolor=webgreen]{hyperref}

\definecolor{webgreen}{rgb}{0,.5,0}

\usepackage{color}
\usepackage{fullpage}
\usepackage{float}

\usepackage{amsmath,amssymb}
\usepackage{amsthm}
\usepackage{amsfonts}
\usepackage{latexsym}

\setlength{\textwidth}{6.5in}
\setlength{\oddsidemargin}{.1in}
\setlength{\evensidemargin}{.1in}
\setlength{\topmargin}{-.5in}
\setlength{\textheight}{8.9in}

\newcommand{\seqnum}[1]{\href{http://www.research.att.com/cgi-bin/access.cgi/as/~njas/sequences/eisA.cgi?Anum=#1}{\underline{#1}}}

\begin{document}

\theoremstyle{plain}
\newtheorem{theorem}{Theorem}
\newtheorem{corollary}[theorem]{Corollary}
\newtheorem{lemma}[theorem]{Lemma}
\newtheorem{proposition}[theorem]{Proposition}

\theoremstyle{definition}
\newtheorem{definition}[theorem]{Definition}
\newtheorem{example}[theorem]{Example}
\newtheorem{conjecture}[theorem]{Conjecture}

\theoremstyle{remark}
\newtheorem{remark}[theorem]{Remark}

\begin{center}
\vskip 1cm{\LARGE\bf An Analytical Approach to \\
\vskip .1in
Exponent-Restricted Multiple \\
\vskip .1in
Counting Sequences}
\vskip 1cm
\large
M. H\"{u}srev Cilasun\\
School of Electrical and Electronics\\
Istanbul Technical University\\
Maslak, Istanbul\\
Turkey\\
\href{mailto:cilasun@itu.edu.tr}{\tt cilasun@itu.edu.tr}
\end{center}

\vskip 0.2 in

\begin{abstract}
This study involves definitions for multiple-counting regular and summation sequences of $\rho$. My paper introduces and proves recurrent relationships for multiple-counting sequences and shows their association with Fermat's little theorem. I also studied matrix representations and obtained generalized Binet formulas for defined sequences. As a result of multiple-counting sequence explanation, this study leads to a better understanding for distribution of composite numbers between consecutive bases.
\end{abstract}

\section{Introduction}
Jacobsthal Sequence is defined as
\begin{equation}
j_{n+2}=j_{n+1}+2j_{n},j_{1}=1,j_{0}=0,n\geq 0  \label{eq:1}
\end{equation} \cite{ref1}
Horadam and Hoggatt has conducted many research on Jacobsthal Sequence. Jacobsthal Sequence has a lot of remarkable properties as counting microcontroller skip instructions \cite{ref6} and counting number of ways to tile a $3\times(n-1)$ rectangle with $2\times 2$ and $1\times 1$ tiles \cite{ref2} \emph{et cetera}. Barry \cite{ref2} also shows that Jacobsthal Sequence can be expressed in floor function notation.  
Fossaceca \cite{ref2} mentioned at sequence A001045 that each element of Jacobsthal Sequence counts integer multiples of $3$ in between corresponding exponents of $2$.

\begin{table}[h]
\label{tablo1} \centering \footnotesize {
\begin{tabular}{|c|c|c|c|c|c|c|c|c|c|c|c|}
\hline
Interval boundaries &  &  &  &  &  &  &  &  &  &  &\\
of exponents of 2 & $2^0$ &  & $2^1$ &  & $2^2$ &  & $2^3$ &  & $2^4$ &  & $%
2^5$ \\ \hline
Numbers of integer &  & 0 &  & 1 &  & 1 &  & 3 &  & 5& \\
multiples of 3 in each interval. &  &  &  &  &  &  &  &  &  &  &\\
\hline
\end{tabular}
} \caption{Number of integers which divides 3 with no remainder
between exponents of 2. The sequence is identical to Jacobsthal
Sequence.}
\end{table}

It seems that there are other Jacobsthal-like recurrence relationships held in between some other exponents and bases when multiplicative numbers are counted and this study investigates various aspects of those recurrent sequences. 

Another concept will be discussed in this paper is Fermat's little theorem's condition which is given in \cite{ref3} as $x^{\rho -1}\equiv$ 1(mod $\rho$). The theorem states that the given condition is satisfied when $\rho$ and $x$ are coprime and $\rho$ is prime. However, vice versa is not always true. The condition is still valid for some composite $\rho$s called \emph{Fermat pseudoprime}s, \emph{Carmichael number}s or \emph{$K_1$ Kn\"{o}del number}s. Particularly for $x=2$ case, $\rho$s are called \emph{Sarrus number}s, \emph{Poulet number}s or \emph{Fermatian}s if they satisfy Fermat's given condition \cite{ref7}.

\section{Generalized Multiple-Counting Sequences between
Exponents}

\begin{definition}
	Let $x$ an $\rho$ be elements of integer set $\{2,3,4,5\dots \}$ and $n$ be an element of $\{0,1,2,3\dots \}$. If an integer sequence $J$ is defined in which $J_{n}$ is equal to the number of multiples of $\rho $ which are greater than $x^{n}$ and less than $x^{n+1}$; then $J$ is a \textit{multiple-counting sequence} of $\rho$ between exponents of $x$.
	Using floor functions, we can derive number of multiples of $\rho$ less than $x^{n+1}$ as $\left \lfloor \frac{x^{n+1}}{\rho} \right \rfloor$. As a natural consequence, multiples of $\rho$ greater than $x^{n}$ and less than $x^{n+1}$ is found as follows:
\begin{equation} \label{floornot}
J_n = \left \lfloor \frac{x^{n+1}}{\rho} \right \rfloor - \left \lfloor \frac{x^{n}}{\rho} \right \rfloor
\end{equation}
\end{definition}

\begin{theorem} Let $x$ and $\rho $ be integers from set $\{1,2,3\dots
\}$. Let numbers $x $ and $\rho $ be relatively prime and satisfy
Fermat's $x^{\rho -1}\equiv $ 1(mod $\rho$ ) condition. For any number $n$ from set $\{0,1,2\dots \}$ with already provided starting terms
$J_{0},J_{1}\dots J_{\rho -1}$, the sequence $J_{n}$ which counts
multiples of $\rho $ between $x^{n}$ and $x^{n+1}$ , also satisfies
the following recurrence relationship:
\begin{equation}
J_{n+\rho -1}=(x-1)\sum_{i=1}^{\rho -2}{J_{n-i+\rho -1}}+xJ_{n}
\label{eq:2}
\end{equation}
\end{theorem}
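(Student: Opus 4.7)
\medskip

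\textbf{Proof plan.} The plan is to reduce the identity to a statement about the residues $r_m := x^m \bmod \rho$, and then invoke Fermat's condition to finish. First I would observe that the sum on the right-hand side telescopes: reindexing shows that
\[
\sum_{i=1}^{\rho-2} J_{n-i+\rho-1} \;=\; J_{n+1}+J_{n+2}+\cdots+J_{n+\rho-2} \;=\; \Bigl\lfloor \tfrac{x^{n+\rho-1}}{\rho}\Bigr\rfloor - \Bigl\lfloor \tfrac{x^{n+1}}{\rho}\Bigr\rfloor,
\]
by the formula \eqref{floornot}. Substituting $J_{n+\rho-1}=\lfloor x^{n+\rho}/\rho\rfloor-\lfloor x^{n+\rho-1}/\rho\rfloor$ on the left and $J_n=\lfloor x^{n+1}/\rho\rfloor-\lfloor x^n/\rho\rfloor$ on the right, the target recurrence collapses to a single identity between four floor expressions:
\[
\Bigl\lfloor \tfrac{x^{n+\rho}}{\rho}\Bigr\rfloor \;=\; x\Bigl\lfloor \tfrac{x^{n+\rho-1}}{\rho}\Bigr\rfloor + \Bigl\lfloor \tfrac{x^{n+1}}{\rho}\Bigr\rfloor - x\Bigl\lfloor \tfrac{x^{n}}{\rho}\Bigr\rfloor.
\]

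Next I would clear floors by writing $\lfloor x^m/\rho\rfloor = (x^m - r_m)/\rho$ with $r_m = x^m \bmod \rho$. Multiplying through by $\rho$, the pure polynomial terms in $x$ cancel in a straightforward way, leaving an equivalent residue identity
\[
r_{n+\rho} \;=\; x\,r_{n+\rho-1} + r_{n+1} - x\,r_{n}.
\]
This is the heart of the claim, and it is where Fermat's hypothesis enters: since $\gcd(x,\rho)=1$ and $x^{\rho-1}\equiv 1 \pmod{\rho}$, multiplying by $x^n$ yields $x^{n+\rho-1}\equiv x^n$ and $x^{n+\rho}\equiv x^{n+1} \pmod{\rho}$, so $r_{n+\rho-1}=r_n$ and $r_{n+\rho}=r_{n+1}$. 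Substituting these equalities makes both sides of the residue identity equal to $r_{n+1}$, closing the argument.

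The main obstacle, such as it is, is purely bookkeeping: making sure that the telescoping index range matches the summation limits $i=1,\dots,\rho-2$, and that the algebra after clearing floors actually isolates $r_{n+\rho}$ cleanly. Once that reduction is done, Fermat's little theorem does the real work in one line. I should also remark that the statement implicitly requires $\rho\ge 3$ so that the sum $\sum_{i=1}^{\rho-2}$ is nonempty; the corner case $\rho=2$ (where the sum is empty and $x^{\rho-1}=x\equiv 1 \pmod 2$ forces $x$ odd) can be treated separately as $J_{n+1}=xJ_n$, which follows by the same residue computation.
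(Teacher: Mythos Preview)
Your argument is correct. It coincides almost exactly with the paper's \emph{alternative} proof (stated there as a separate lemma right after the theorem): telescope the sum via the floor-function formula for $J_n$, reduce to a single identity among four floor expressions, and then use $x^{\rho-1}\equiv 1\pmod\rho$ to finish. The paper phrases the last step as ``write $x^{\rho-1}=a\rho+1$ and pull the integer $a$-terms outside the floor,'' whereas you phrase it via residues $r_m=x^m\bmod\rho$; these are the same computation.

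The paper's \emph{primary} proof of the theorem, however, proceeds differently: it first rewrites the recurrence as $J_{n+\rho-1}=(x-1)\sum_{i=1}^{\rho-1}J_{n-i+\rho-1}+J_n$, then parametrizes $x^n=\lambda\rho+\varphi$ and directly counts multiples of $\rho$ in each block $[x^m,x^{m+1}]$, introducing ad hoc correction terms $\delta$ and $\varepsilon$ and arguing (from the Fermat congruence) that the same $\delta$ works at index $n$ and at index $n+\rho-1$. Your floor/residue reduction is cleaner and more transparent; the paper's counting argument is more combinatorial in flavor but carries more bookkeeping. Your remark about the empty-sum case $\rho=2$ is a nice addition that the paper does not address.
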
%
\begin{proof}
Let the expression is transformed arithmetically to the following:
\begin{equation}\label{eq:3}
J_{n+\rho-1}=(x-1)\sum^{\rho-1}_{i=1}{J_{n-i+\rho-1}}+J_n
\end{equation}
Aassume that $x^n$ is formulated parametrically by
$\lambda\rho+\varphi$, then $x^{n+w}$ is also formulated as
$x^w(\lambda\rho+\varphi)$. It is easily found that there are
$(x-1)(\lambda\rho+\varphi)-1$ integers between $x^n$ and $x^{n+1}$.
{\scriptsize
\begin{table}[h]\label{tablo2}
\centering \scriptsize
\begin{tabular}{|c|c|c|c|c|c|c|}
\hline
$x^n$&&$x^{n+1}$&\dots&$x^{n+\rho-1}$&&$x^{n+\rho}$\\
\hline
&$(x-1)(\lambda\rho+\varphi)-1$&&\dots
&&$x^{\rho-1}(x-1)(\lambda\rho+\varphi)-1$&\\
\hline
\end{tabular}
\caption{Numbers of integers between exponents of x.}
\end{table}}

Table \ref{tablo2} shows that there are $x^{\rho-1}(x-1)(\lambda\rho+\varphi)-1$ numbers in between $x^{n+\rho-1}$ and $x^{n+\rho}$. Let $\delta$ be a
unique arithmetical minimum plus or maximum minus process which
makes $(x-1)(\lambda\rho+\varphi)-1$ a certain multiple of $\rho$.
After $\delta$ process is applied, number of integers which divide $\rho$ in chosen interval is easily found by dividing the number of elements in chosen interval by $\rho$.
\begin{equation}\label{eq:4}
    J_n = \frac{(x-1)(\lambda\rho+\varphi)-1+\delta}{\rho}
\end{equation}
Because of $x^{\rho-1}\equiv 1(mod \rho)$  initial condition, the
expression $x^{\rho-1}(x-1)(\lambda\rho+\varphi)-1$ also needs the
identical $\delta$ process to be made a multiple of $\rho$.
\begin{equation}\label{eq:5}
    J_{n+\rho-1} = \frac{x^{\rho-1}(x-1)(\lambda\rho+\varphi)-1+\delta}{\rho}
\end{equation}
\begin{table}[h]\label{tablo3}
\centering \footnotesize
\begin{tabular}{|c|c|c|c|c|c|c|c|c|c|}
\hline
$(Mod \rho) \equiv$&$1$&$2$&$3$&$\dots$&$\varphi$&$\varphi +1$&$\dots$&$\rho-1$&$\rho$\\
\hline
&&&&&$\lambda\rho+\varphi$&*&*&*&*\\
&*&*&*&*&*&*&*&*&*\\
&\dots&\dots&\dots&\dots&\dots&\dots&\dots&\dots&\dots \\
&*&*&*&*&*&*&*&*&*\\
&*&*&*&*&$x^{\rho-1}(\lambda\rho+\varphi)$&&&&\\
\hline
\end{tabular}
\caption{\footnotesize Distribution for $\varepsilon$. Asterisk signs
refer to the consecutive integers between boundaries $
\lambda\rho+\varphi $ and $ x^{\rho-1}(\lambda\rho+\varphi)$. $\rho
$ rows include terms which are multiple of $\rho$.}
\end{table}

On the other hand, let $\varepsilon$ be another unique process which
makes the number of integers at given interval divides $\rho$. We see at Table 3 that $\varepsilon$ process must be equal to +1 if we want to find the number of integers which divides $\rho$. Then the sigma notation part of equation is found as follows:
\begin{equation}\label{eq:6}
(x-1)\sum^{\rho-2}_{i=1}{J_{n-i+\rho-1}}
=\frac{(x-1)(x^{\rho-1}(\lambda\rho+\varphi)-\lambda\rho-\varphi-1+\varepsilon)}{\rho}
\end{equation}
Rewrite expression (\ref{eq:3}) by combining (\ref{eq:4}), (\ref{eq:5}) and (\ref{eq:6}) and consider that $\varepsilon=+1$, then proof is provided after simplification. 
\end{proof}
 
\begin{lemma}
An alternative simpler proof for the previous theorem can be derived from the floor function notation of $J_n$ in (\ref{floornot}).
\end{lemma}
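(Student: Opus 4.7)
The plan is to convert the recurrence into a statement about the floor function $F_n := \lfloor x^n/\rho \rfloor$ (so $J_n = F_{n+1} - F_n$) and then exploit Fermat's condition at the level of residues rather than intervals, avoiding the ad-hoc $\delta,\varepsilon$ corrections used in the original proof.

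First I would rewrite the right-hand side of \eqref{eq:2} by telescoping: the sum $\sum_{i=1}^{\rho-2} J_{n-i+\rho-1}$ runs over $J_{n+1}, J_{n+2}, \dots, J_{n+\rho-2}$, which collapses to $F_{n+\rho-1} - F_{n+1}$. Substituting and grouping terms, the identity to be proved becomes
\begin{equation*}
F_{n+\rho} - xF_{n+\rho-1} \;=\; F_{n+1} - xF_n.
\end{equation*}
In other words, setting $g_n := F_{n+1} - xF_n$, I need to show that $g_{n+\rho-1} = g_n$, i.e.\ that $g_n$ is periodic of period $\rho-1$.

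Second, I would compute $g_n$ explicitly. Writing $x^n = \rho F_n + r_n$ with $r_n := x^n \bmod \rho$, multiplication by $x$ gives $x^{n+1} = \rho x F_n + x r_n$, so
\begin{equation*}
F_{n+1} \;=\; x F_n + \left\lfloor \frac{x r_n}{\rho}\right\rfloor,
\qquad\text{hence}\qquad g_n \;=\; \left\lfloor \frac{x r_n}{\rho}\right\rfloor.
\end{equation*}
Thus $g_n$ is a function of the residue $r_n$ alone.

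Third, I would invoke the hypothesis $x^{\rho-1} \equiv 1 \pmod{\rho}$: this immediately gives $r_{n+\rho-1} = x^{\rho-1}\cdot x^n \bmod \rho = r_n$, so $g_{n+\rho-1} = g_n$, which is exactly what step one reduced the claim to. I do not anticipate a real obstacle; the only thing to be careful about is the exact index range in the telescoping and the fact that the $\rho-1$ periodicity of $r_n$ (rather than merely its eventual periodicity) is genuinely needed, which is precisely where the coprimality of $x$ and $\rho$ in the hypotheses is used.
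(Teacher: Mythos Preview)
Your proposal is correct and follows essentially the same route as the paper: substitute the floor-function form, telescope the sum to reach the identity $F_{n+\rho}-xF_{n+\rho-1}=F_{n+1}-xF_n$, and finish using $x^{\rho-1}\equiv 1\pmod\rho$. The only cosmetic difference is in that last step: the paper writes $x^{\rho-1}=a\rho+1$ and pulls the integer $a$-terms out of the floors directly, whereas you phrase it as $(\rho-1)$-periodicity of the residues $r_n$ and hence of $g_n$; these are the same observation.
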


\begin{proof}
For a multiple-counting sequence which satisfies the recurrence relationship given in (\ref{eq:3}), it is also known that $x^{\rho - 1} \equiv 1 (mod \rho)$. Replace $J_n$ with its floor function notation (\ref{floornot}). The following expression is obtained when intermediate elements of the sigma notation are eliminated by themselves.
\begin{equation}\label{eq:e}
 \left \lfloor \frac{x^{n+\rho}}{\rho}\right\rfloor-\left\lfloor\frac{x^{n+\rho-1}}{\rho}\right\rfloor= (x-1)\left ( \left\lfloor\frac{x^{n+\rho-1}}{\rho}\right\rfloor -\left\lfloor\frac{x^n}{\rho}\right\rfloor\right ) +\left\lfloor \frac{x^n+1}{\rho}\right\rfloor-\left\lfloor\frac{x^n}{\rho}\right\rfloor
\end{equation}
Let $a$ be an arbitrary constant. Assume $x^{\rho-1}$ is equal to $a\rho+1$ as a result of the $x^{\rho - 1} \equiv 1 (mod \rho)$ condition. When $a\rho+1$ is substituted to the (\ref{eq:e}), floor function takes $a$-terms out because of its integer exclusion property. Right and left hand sides are simplified and proof is completed. 
\end{proof}

\begin{example}
Table \ref{tablo4} contains several examples of multiple-counting sequences for different $\rho$s and $x$s

\begin{table}[h]
\centering
\begin{tabular}{|c|c|c|}
\hline 
$\rho$ & $x$ &\textbf{Summary of Sequence} \\ 
\hline 
3 & 2 & {\footnotesize Identical to Jacobsthal Sequence: $J_n=J_{n-1}+2J_{n-2},n\geq 2, J_0=0, J_1=1$} \\
\hline 
5 & 3 & {\footnotesize $J_n=2J_{n-1}+2J_{n-2}+2J_{n-3}+3J_{n-4},n\geq 4, J_0=0, J_1=1, J_2=4, J_3=11$}\\
\hline
3 & 10 & {\footnotesize $J_n=9J_{n-1}+10J_{n-2},n\geq 2, J_0=3, J_1=30$}\\
&&  \footnotesize $J_n$ counts (n+1)-digit numbers which divides 3 \\
\hline
\end{tabular}
\caption{Some multiple-counting sequences} \label{tablo4}
\end{table}

Additionally, Cooka and Bacon \cite{ref8} defines sequences $J_n$s for various $\rho$s called \textquotedblleft Higher Order Jacobsthal Sequences\textquotedblright when $x=2$.

\end{example}

\begin{remark}
For any element of sequence $J_n$, Binet expression
\begin{equation}  \label{eq:7}
J_n=\sum^n_{i=1}{\lambda_i^{n-i+2}v_{1,i}(\sum_{j=1}^n{v_{i,j}^{-1}u_j^{%
\rho-2}})}
\end{equation}
is always satisfied when key matrix

\[
K_{(\rho-1) \times (\rho-1)}= \left(
\begin{array}{cccccc}
x-1 & x-1 & \dots & x-1 & x-1 & x \\
1 & 0 & \dots & 0 & 0 & 0 \\
0 & 1 & \ddots & 0 & 0 & 0 \\
\vdots & \ddots & \ddots & \ddots & \vdots & \vdots \\
0 & 0 & \ddots & 1 & 0 & 0 \\
0 & 0 & \dots & 0 & 1 & 0 \\
&  &  &  &  &
\end{array}
\right)
\]

satisfies equation $Ku^n=u^{n+1}$ for vector

\[
u^n= \left(
\begin{array}{c}
u_1 \\
u_2 \\
\vdots \\
u_{p-1} \\
\end{array}
\right) = \left(
\begin{array}{c}
J_n \\
J_{n-1} \\
\vdots \\
J_{n-p+2} \\
\end{array}
\right)
\]
if K is diagonalizable and K's eigenvector matrix V is non-singular.
Note that elements $u_{i}$,$v_{ij}$ belong to u,V.
\end{remark}

\begin{proof}
 Using Kalman's method \cite{ref5}, explicit Binet form of our sequence is easily obtained. After $K=V\Lambda V^{-1}$ decomposition;  equation $Ku^n=u^{n+1}$  is transformed to $V\Lambda V^{-1}u^n=u^{n+1}$ where $\Lambda$ is diagonal eigenvalue matrix of K. We only need top elements of vectors, thus s muiltiply the expression by
$$
\left(
\begin{array}{cccc}
1&0&\dots&0\\
\end{array}
\right)
$$
$(\rho-1)\times 1$ matrix from the left. Any particular term of sequence
$J_n$ satisfies equation:
\begin{equation} \label{regmat}
J_n= \left(
\begin{array}{cccc}
1&0&\dots&0\\
\end{array}
\right) (V\Lambda^{n-\rho+2}(V^{-1}u^{\rho-2}))
\end{equation}
Since
$$
(V^{-1}u^{\rho-2}) = \left(
\begin{array}{cccc}
v^{-1}_{1,1}&v^{-1}_{1,2}&\dots&v^{-1}_{1,\rho-1}\\
v^{-1}_{2,1}&v^{-1}_{2,2}&\dots&v^{-1}_{2,\rho-1}\\
\dots&\dots&\ddots&\dots\\
v^{-1}_{{\rho-1 },1}&v^{-1}_{{\rho-1},2}&\dots&v^{-1}_{\rho-1,\rho-1}\\
\end{array}
\right) \left(
\begin{array}{c}
J_{\rho-2}\\
J_{\rho-3}\\
\vdots\\
J_{0}\\
\end{array}
\right)
$$
 and
$$
 V =
\left(
\begin{array}{cccc}
v_{1,1}&v_{1,2}&\dots&v_{1,\rho-1}\\
v_{2,1}&v_{2,2}&\dots&v_{2,\rho-1}\\
\dots&\dots&\ddots&\dots\\
v_{{\rho-1 },1}&v_{{\rho-1},2}&\dots&v_{\rho-1,\rho-1}\\
\end{array}\\
\right)
$$
and
$$
\Lambda^{n-\rho+2} =
\left(
\begin{array}{cccc}
\lambda_{1}^{n-\rho+2}&0&\dots&0\\
0&\lambda_{2}^{n-\rho+2}&\ddots&\vdots\\
\vdots&\ddots&\ddots&0\\
0&\dots&0&\lambda_{\rho-1}^{n-\rho+2}\\
\end{array}
\right)\\
$$
are known, we can compute (\ref{regmat}) after required substitution.
Remember that $\lambda_1,\lambda_2, \dots \lambda_n$ denote
eigenvalues of K. After the outcome of the computation is simplified, we obtain the initial expression (\ref{eq:7}). 
\end{proof}

\vskip 0.7cm
\section{Generalized Multiple-Counting Summation Sequences
until Exponents}

\begin{definition}
Let an$x$ and $\rho$ be elements of integer set $A=\{2,3,4,5 \dots\}$ and n
be  element of $\{0, 1, 2, 3, \dots\}$. If an integer sequence $S$ is
defined when $S_n$ is equal to the number of integer multiples of
$\rho$, which are greater than $0$ and less than $x^{n+1}$; then $J$
is a \it{multiple-counting summation sequence} of $\rho$. Summation sequence has a more compact form as follows:
\begin{equation}
S_n =\left \lfloor \frac{x^{n+1}}{\rho} \right \rfloor
\end{equation}
The strong interrelation between regular multiple-counting sequences and \textit{multiple-counting summation sequences} is expressed as

\begin{equation}\label{eq:11}
S_n=\sum^{n}_{i=0}{J_i}
\end{equation}

\end{definition}

The summation idea of a multiple-counting sequence is very similar to the \textit{Jacobsthal Representation Sequence} which Horadam \cite{ref4} mentions. The summation sequence satisfies the same recurrence relationship which regular sequence also does, except for a particularly determined constant.

\begin{theorem}
Let x and $\rho$ be integers from set $\{1,2,3\dots\}$ and let
numbers x and $\rho$ satisfy Fermat's $x^{\rho-1}\equiv$ 1(mod $\rho$)
condition. For any number $n$ from set $R=\{0,1,2\dots\}$, with
already provided starting terms $S_0,S_1 \dots S_{p-1}$ , the
sequence $S_n$ which counts multiples of $\rho$ between $0$ and
$x^{n+1}$ , satisfies the following recurrence relationship:
\begin{equation}  \label{eq:8}
S_{n+\rho-1}=(x-1)\sum^{\rho-2}_{i=1}{S_{n-i+\rho-1}}+xS_n+\pi
\end{equation}
or
\begin{equation}  \label{eq:9}
S_{n+\rho-1}=(x-1)\sum^{\rho-1}_{i=1}{S_{n-i+\rho-1}}+S_n+\pi
\end{equation}
,where $\pi$ is a unique constant for a particular combination of
$\rho$,x and $J_0,J_1\dots J_{p-2}$ initial conditions. Formula for
$\pi$ is:
\begin{equation}  \label{eq:10}
\pi=\sum^{\rho-1}_{i=0}{J_i}-(x-1)\sum^{\rho-1}_{i=1}{\sum^{i-2}_{j=0}{J_j}
}
\end{equation}
\end{theorem}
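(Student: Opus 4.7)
The plan is to piggy-back on the recurrence for $J_n$ established in Theorem~1 (in the form (\ref{eq:3})) and exploit the identity (\ref{eq:11}) which says $S_n$ is the partial-sum sequence of $J_n$, so that $J_m = S_m - S_{m-1}$ with the convention $S_{-1} = 0$. Because (\ref{eq:8}) is obtained from the $J$-recurrence by applying the partial-sum operator, the passage from $J$ to $S$ should preserve the recurrence coefficients and produce at most an additive constant coming from uncancelled boundary terms; that constant is exactly $\pi$.

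Concretely, I would define the defect
\begin{equation*}
T_n \;:=\; S_{n+\rho-1} \;-\; (x-1)\sum_{i=1}^{\rho-2} S_{n-i+\rho-1} \;-\; x S_n ,
\end{equation*}
so that (\ref{eq:8}) is the assertion that $T_n$ is a constant (which we will then identify with $\pi$). Computing $T_n - T_{n-1}$ and using $S_k - S_{k-1} = J_k$ termwise collapses the expression to
\begin{equation*}
J_{n+\rho-1} \;-\; (x-1)\sum_{i=1}^{\rho-2} J_{n-i+\rho-1} \;-\; x J_n ,
\end{equation*}
which vanishes by Theorem~1. Hence $T_n = T_0$ for all $n$ in the range where the $J$-recurrence is valid, proving that a recurrence of the form (\ref{eq:8}) holds with some constant $\pi = T_0$. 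The equivalence with (\ref{eq:9}) is then immediate from the algebraic identity $xS_n = (x-1)S_n + S_n$, which absorbs one extra term into the sum and changes its upper limit from $\rho-2$ to $\rho-1$, mirroring the step between (\ref{eq:2}) and (\ref{eq:3}).

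The remaining task is to rewrite $\pi = T_0$ in the explicit form (\ref{eq:10}). Evaluating at $n=0$ gives
\begin{equation*}
\pi \;=\; S_{\rho-1} \;-\; (x-1)\sum_{i=1}^{\rho-2} S_{\rho-1-i} \;-\; x S_0 ,
\end{equation*}
after which I would substitute $S_k = \sum_{j=0}^{k} J_j$ and swap the order of summation (writing $\sum_{i} \sum_{j \le k(i)} J_j = \sum_{j} (\#\{i:j\le k(i)\}) J_j$) to collect coefficients of each $J_j$ and match (\ref{eq:10}). The conceptual step, that is, showing the recurrence holds up to an additive constant, is straightforward once we differentiate $T_n$; the main obstacle is purely clerical, namely the index bookkeeping needed to convert the $S$-expression for $\pi$ into the nested $J$-summation displayed in (\ref{eq:10}).
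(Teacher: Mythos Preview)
Your proposal is correct and follows essentially the same strategy as the paper: both reduce the $S$-recurrence to the already-proved $J$-recurrence (\ref{eq:3}) via the relation $S_n=\sum_{i\le n}J_i$, and then identify the leftover $n$-independent constant as $\pi$. The only difference is presentational: the paper substitutes $S_n=\sum J_i$ directly into (\ref{eq:9}) and separates $n$-dependent from $n$-independent pieces, whereas you package the same cancellation as the first difference $T_n-T_{n-1}$ of the defect; both boil down to the identical application of Theorem~1.
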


\begin{proof}
Rewrite (\ref{eq:9}) using property (\ref{eq:11}):
\begin{equation}\label{eq:12}
\sum^{n+\rho-1}_{i=0}{J_i}=(x-1)\sum^{\rho-1}_{i=1}{\sum^{n-i+\rho-1}_{j=0}{J_j}}+\sum^{n}_{i=0}{J_i}+\pi
\end{equation}
Separate it into n-dependent and n-independent parts:

\begin{equation}\label{eq:13}
\sum^{\rho-2}_{i=0}{J_i}+\sum^{n+\rho-1}_{i=\rho-1}{J_i}=(x-1)\sum^{\rho-1}_{i=1}{(\sum^{i-2}_{j=0}{J_j}+\sum^{n-i+\rho-1}_{j=i-1}{J_j})}+\sum^{n}_{i=0}{J_i}+\pi
\end{equation}

\begin{equation}\label{eq:14}
\sum^{\rho-2}_{i=0}{J_i}+\underbrace{\sum^{n+\rho-1}_{i=\rho-1}{J_i}}=(x-1)\sum^{\rho-1}_{i=1}{\sum^{i-2}_{j=0}{J_j}}+\underbrace{(x-1)\sum^{\rho-1}_{i=1}{\sum^{n-i+\rho-1}_{j=i-1}{J_j}}}+\underbrace{\sum^{n}_{i=0}{J_i}}+\pi
\end{equation}

n-dependent underbraced parts of equation (\ref{eq:14}) are
eliminated by induction on (\ref{eq:3}). Since n-independent parts
of expression above are constant for whole sequence, $\pi$ also
depends on a constant value and required proof is provided.
\end{proof}

\begin{remark}
To obtain a matrix-based Binet representation for sequence $S$, we
use a similar approach with the sequence $J$. Since the only
difference between definitions of two sequences $J$ and $S$ is a
constant $\pi$, we can easily modify the key matrix to insert $\pi$
dependency with notational changes to get a simple formula.
\begin{equation}  \label{eq:15}
S_n=\sum^{n}_{i=1}{\mu^{n-i+2}_{i}w_{1,i}(\sum^{n}_{j=1}{w^{-1}_{i,j}%
\tau_{j}^{\rho-1}})}
\end{equation}
Assuming L is the such a key matrix
\[
L_{\rho \times \rho}= \left(
\begin{array}{ccccccc}
x-1 & x-1 & \dots & x-1 & x-1 & x & 1 \\
1 & 0 & \dots & 0 & 0 & 0 & 0 \\
0 & 1 & \ddots & 0 & 0 & 0 & 0 \\
\vdots & \ddots & \ddots & \ddots & \vdots & \vdots &  \\
0 & 0 & \ddots & 1 & 0 & 0 & 0 \\
0 & 0 & \dots & 0 & 1 & 0 & 0 \\
0 & 0 & 0 & 0 & 0 & 0 & 1 \\
&  &  &  &  &  &
\end{array}
\right)
\]
of $L\tau^n=\tau^{n+1}$ where $\tau$ holds equation:
\[
\tau^n= \left(
\begin{array}{c}
\tau_1 \\
\tau_2 \\
\vdots \\
\tau_{p-1} \\
\pi \\
\end{array}
\right) = \left(
\begin{array}{c}
S_n \\
S_{n-1} \\
\vdots \\
S_{n-p+2} \\
\pi \\
\end{array}
\right)
\]
if L is diagonalizable and L's eigenvector matrix W is non-singular.
Note that elements $w_{ij}$ are belonged to W and $\mu_1,\mu_2,
\dots \mu_n$ denote eigenvalues of L. 
\end{remark}
\begin{proof}
After $L=W\Lambda W^{-1}$ decomposition; equation
$L\tau^n=\tau^{n+1}$  is converted to $W\Lambda W^{-1}
\tau^n=\tau^{n+1}$ where $\Lambda $ is diagonal eigenvalue matrix of
$L$. is We only need top elements of vectors, thus each side is
multiplied by
$$
\left(
\begin{array}{cccc}
1&0&\dots&0\\
\end{array}
\right)
$$
 $(\rho\times 1)$matrix from right.  Any particular term of sequence $S_n$ satisfies
\begin{equation} \label{summat}
S_n= \left(
\begin{array}{cccc}
1&0&\dots&0\\
\end{array}
\right) (W\Lambda^{n-\rho+2}(W^{-1}\tau^{\rho-2}))
\end{equation}
 equation. Since
$$
W^{-1}\tau^{\rho-2}= \left(
\begin{array}{rrrr}
w^{-1}_{1,1}&w^{-1}_{1,2}&\dots&w^{-1}_{1,\rho}\\
w^{-1}_{2,1}&w^{-1}_{2,2}&\dots&w^{-1}_{2,\rho}\\
\dots&\dots&\ddots&\dots\\
w^{-1}_{{\rho },1}&w^{-1}_{{\rho},2}&\dots&w^{-1}_{\rho,\rho}\\
\end{array}
\right) \left(
\begin{array}{c}
S_{\rho-2}\\
S_{\rho-3}\\
\vdots\\
S_{0}\\
\pi\\
\end{array}
\right)
$$
and 
$$
W =  \left(
\begin{array}{rrrr}
w_{1,1}&w_{1,2}&\dots&w_{1,\rho}\\
w_{2,1}&w_{2,2}&\dots&w_{2,\rho}\\
\dots&\dots&\ddots&\dots\\
w_{{\rho},1}&w_{{\rho},2}&\dots&w_{\rho,\rho}\\
\end{array}
\right)
$$
and
$$
\Lambda^{n-\rho+2} =
\left(
\begin{array}{rrrr}
\mu_{1}^{n-\rho+2}&0&\dots&0\\
0&\mu_{2}^{n-\rho+2}&\ddots&\vdots\\
\vdots&\ddots&\ddots&0\\
0&\dots&0&\mu_{\rho}^{n-\rho+2}\\
\end{array}
\right)
$$
are known, (\ref{summat}) can be computed.
After (\ref{summat}) is simplified, initial expression
(\ref{eq:15}) is obtained.
\end{proof}

\section{Concluding Remarks}
Several sequences in OEIS \cite{ref2} seem identical to some multiple counting sequences. $A007910$ is a multiple counting sequence when ($x=2$, $\rho=5$) and $A077947$ when ($x=2$,$\rho=7$) which are also called Fourth and Sixth Order Jacobsthal Sequences \cite{ref8}. Additionally $A000302$ and $A093138$ are multiple counting sequences for ($x=4$, $\rho=3$) and ($x=10$, $\rho=3$) definitions. Alternative examples goes on.

Unlike prime-counting function $\Pi(x)$ which is $A006880$ of OEIS \cite{ref2}, sequences $J_n$ and $S_n$ count composites of given prime $\rho$. There is only one exception for both sequences in their initial terms. If $\rho$ is prime, $J_n$ and $S_n$ counts all composites of prime $\rho$.

As this study has generalized relationships between defined
sequences  and numbers of multiples between defined boundaries, outcomes of our research can be used in various electronics, telecommunications and
computer  science applications. Another considerable consequence of
our study might be an implementation of a new prime prediction and
factorization method due to the strong connection with Fermat's
little theorem. Distribution of prime numbers is a key concept in
present number theory and as a measure of this phenomenon, the role
of multiple counting sequences is undeniable.
 
\section{Acknowledgement}
I would like to thank Mustafa Sefa Y\"{o}r\"{u}ker, Adem \c{S}ahin, Mustafa Zengin and Ayhan Yenilmez for their helpful advices.

\bigskip
\hrule
\bigskip

\noindent 2010 {\it Mathematics Subject Classification}:
Primary 11Nxx; Secondary 11Pxx, 11Txx, 11Yxx, 11Bxx

\noindent \emph{Keywords: } 
Carmichael number, Fermat's little theorem, Binet formula, floor function, multiple-counting sequence, Fermat pseudoprime, Jacobsthal Sequence

\bigskip
\hrule
\bigskip

\noindent (Concerned with sequences
\seqnum{A001045},
\seqnum{A007910},
\seqnum{A093138},
\seqnum{A077947}, and
\seqnum{A000302}.)

\bigskip
\hrule
\bigskip

\vspace*{+.1in}
\noindent
preprint submitted to Journal of Integer Sequences.

\bigskip
\hrule
\bigskip

\noindent
Return to
\htmladdnormallink{Journal of Integer Sequences home page}{http://www.cs.uwaterloo.ca/journals/JIS/}.
\vskip .1in

\end{document}